\def\convf{\hbox{\space \raise-2mm\hbox{$\textstyle      \bigotimes \atop \scriptstyle \omega$} \space}}
\def\0{{\bar 0}}
\def\1{{\bar 1}}
\def\Z{{\mathbb Z}}
\def\Max{{\operatorname{Max}\;}}
\def\mx {{\operatorname{max}}}
\def\Spec  {{\operatorname{Spec  }\;}}
\newcommand{\ttk}{\mathtt k}
\newcommand{\Xq}{X/\!\!/\sim}
\newcommand{\fG}{\mathfrak{G}}
\newcommand{\itemiiip}{\item[{{\rm(iii´)}}]}
\newcommand{\itemi}{\item[{{\rm(i)}}]}
\newcommand{\itemii}{\item[{{\rm(ii)}}]}
\newcommand{\itemiii}{\item[{{\rm(iii)}}]}
\newcommand{\itemiv}{\item[{{\rm(iv)}}]}
\newcommand{\itema}{\item[{{\rm$($a$)$}}]}
\newcommand{\itemb}{\item[{{\rm$($b$)$}}]}
\newcommand{\itemc}{\item[{{\rm$($c$)$}}]}
\newcommand{\itemd}{\item[{{\rm$($d$)$}}]}
\newcommand{\noi}{\noindent}
\newcommand{\Gl}{\Lambda}
\newcommand{\Gd}{\Delta}
\newcommand{\gs}{\sigma}
\newcommand{\gt}{\tau}
\newcommand{\gl}{\lambda}
\newcommand{\gk}{\kappa}
\newcommand{\gth}{\theta}
\newcommand{\ot}{\otimes}
\newcommand{\fg}{\mathfrak{g}}\newcommand{\fgl}{\mathfrak{gl}}
\newcommand{\fh}{\mathfrak{h}}
\newfont{\eufm}{eufm10 scaled\magstep1}
\newcommand{\bcu}{\bigcup}
\newcommand{\bsk}{\backslash}
\newcommand{\cO}{\mathcal{O}}
\newcommand{\cC}{\mathcal{C}}
\newcommand{\cV}{\mathcal{V}}
\newcommand{\cW}{\mathfrak{W}}
\newcommand{\ey}{\end{eqnarray}}
\newcommand{\by}{\begin{eqnarray}}
\newcommand{\bco}{\begin{conjecture}}
\newcommand{\eco}{\end{conjecture}}
\newcommand{\bq}{\begin{question}}
\newcommand{\eq}{\end{question}}
\newcommand{\ba}{\begin{alg}}
\newcommand{\ea}{\end{alg}}
\newcommand{\bpf}{\begin{proof}}
\newcommand{\epf}{\end{proof}}
\newcommand{\bt}{\begin{theorem}}
\newcommand{\et}{\end{theorem}}
\newcommand{\br}{\begin{rem}}
\newcommand{\er}{\end{rem}}
\newcommand{\brs}{\begin{rems}}
\newcommand{\ers}{\end{rems}}
\newcommand{\bi}{\begin{itemize}}
\newcommand{\ei}{\end{itemize}}
\newcommand{\bl}{\begin{lemma}}
\newcommand{\bsul}{\begin{sublemma}}
\newcommand{\esul}{\end{sublemma}}
\newcommand{\bp}{\begin{proposition}}
\newcommand{\be}{\begin{equation}}
\newcommand{\bc}{\begin{corollary}}
\newcommand{\bexs}{\begin{examples}}
\newcommand{\eexs}{\end{examples}}
\newcommand{\bexa}{\begin{example}}
\newcommand{\eexa}{\end{example}}
\newcommand{\bex}{\begin{exercise}}
\newcommand{\eex}{\end{exercise}}
\newcommand{\btab}{\begin{tab}}
\newcommand{\etab}{\end{tab}}
\newcommand{\el}{\end{lemma}}
\newcommand{\ep}{\end{proposition}}
\newcommand{\ee}{\end{equation}}
\newcommand{\ec}{\end{corollary}}
\newcommand{\Bc}{\begin{center}}
\newcommand{\Ec}{\end{center}}
\newcommand{\bh}{\begin{hyp}}
\newcommand{\eh}{\end{hyp}}
\newcommand{\bhs}{\begin{hyps}}
\newcommand{\ehs}{\end{hyps}}
\newcommand{\bd}{\begin{dfn}}
\newcommand{\ed}{\end{dfn}}
\newcommand{\bn}{\begin{notn}}
\newcommand{\en}{\end{notn}}
\begin{document}
\title{Table of Contents}

\newtheorem{thm}{Theorem}[section]
\newtheorem{hyp}[thm]{Hypothesis}
 \newtheorem{hyps}[thm]{Hypotheses}
\newtheorem{notn}[thm]{Notation}

  \newtheorem{rems}[thm]{Remarks}
\newtheorem{questions}[thm]{Questions}
\newtheorem{question}[thm]{Question}
\newtheorem{conjecture}[thm]{Conjecture}
\newtheorem{theorem}[thm]{Theorem}
\newtheorem{theorem a}[thm]{Theorem A}
\newtheorem{example}[thm]{Example}
\newtheorem{examples}[thm]{Examples}
\newtheorem{corollary}[thm]{Corollary}
\newtheorem{rem}[thm]{Remark}
\newtheorem{lemma}[thm]{Lemma}
\newtheorem{sublemma}[thm]{Sublemma}
\newtheorem{cor}[thm]{Corollary}
\newtheorem{proposition}[thm]{Proposition}
\newtheorem{exs}[thm]{Examples}
\newtheorem{ex}[thm]{Example}
\newtheorem{exercise}[thm]{Exercise}
\numberwithin{equation}{section}%
\setcounter{part}{0}

\newcommand{\drar}{\rightarrow}
\newcommand{\lra}{\longrightarrow}
\newcommand{\rra}{\longleftarrow}
\newcommand{\dra}{\Rightarrow}
\newcommand{\dla}{\Leftarrow}
\newcommand{\rl}{\longleftrightarrow}
\newtheorem{Thm}{Main Theorem}
\newtheorem*{thm*}{Theorem}
\newtheorem{lem}[thm]{Lemma}
\newtheorem*{lem*}{Lemma}
\newtheorem*{prop*}{Proposition}
\newtheorem*{cor*}{Corollary}
\newtheorem{dfn}[thm]{Definition}
\newtheorem*{defn*}{Definition}
\newtheorem{notadefn}[thm]{Notation and Definition}
\newtheorem*{notadefn*}{Notation and Definition}
\newtheorem{nota}[thm]{Notation}
\newtheorem*{nota*}{Notation}
\newtheorem{note}[thm]{Remark}
\newtheorem*{note*}{Remark}
\newtheorem*{notes*}{Remarks}
\newtheorem{hypo}[thm]{Hypothesis}
\newtheorem*{ex*}{Example}
\newtheorem{probs}[thm]{Problems}
\newtheorem{conj}[thm]{Conjecture}
\newtheorem{quests}[thm]{Questions}
\newtheorem{quest}[thm]{Question}
\newtheorem{prob}[thm]{Problem}

\title{ Categorical quotients for actions of  groupoids on varieties}
\author{Ian M. Musson\\Department of Mathematical Sciences\\
University of Wisconsin-Milwaukee\\ email: {\tt
musson@uwm.edu}}
\maketitle

\begin{abstract}  For certain actions of the Weyl groupoid $\cW$ from   \cite{SV2} on an affine variety $X$,   geometric properties of the map 
$\pi: X \lra Y=  \Spec \cO(X)^\cW$ were studied in 
  \cite{M24}, In this paper we show that if the base field  $\ttk$ is uncountable, the map $\pi$
is a geometric quotient which is universal in  the  category of $\ttk$-schemes. To do this we adapt a result from 
\cite{MFK} showing that a geometric quotient is universal in the 
category of $\ttk$-schemes, to quotients by  groupoids and more generally by equivalence relations.  In our approach  a key role is played by the closed points and Jacobson schemes.
\end{abstract}
  

\section{Introduction}\label{intro}  Throughout  $\ttk$ denotes  an algebraically closed field and iff means if and only if.  
For a $\ttk$-algebra $R$, let $\Max  R$ denote the space of maximal ideals of $R$ with the subspace topology inherited from $\Spec R$. 
Consider an  extension  $R \subseteq S$ of $\ttk$-algebras with $S$ finitely generated.  
If 
$M \in \Max S $, then $S/M \cong \ttk$, so if  $m=M\cap R$, then \be 
\label{sgs4}  \ttk \hookrightarrow R/m \cong (R +M)/M \subseteq S/M \cong\ttk. \ee  Thus $m \in \Max  R$ and the map $\pi:X=\Spec S\lra Y=\Spec R$ restricts to a map 
 \be \label{sgs51}\pi_\mx: \Max S \lra  \Max R, \quad M \mapsto M\cap R . \ee  
As in \cite{M24}, we say the {\it relative weak Nullstellensatz} (RWN) for the pair $(R, S)$ if  $\pi_\mx$ is onto.  By Zorn's Lemma an equivalent condition is:  if $I\neq R$ is an ideal of $R$, then  $IS \neq S$. 
If RWN  holds for the pair $(R, S)$, and $m \in \Max R$, then  for any $M \in \Max S $ containing $mS$, we have $M\cap R = m$.  Thus we can identify 
$ \Max R$ with the set of equivalence classes in $\Max S$  for the equivalence relation determined by the fibers of $\pi_\mx$. 
  Since the residue fields are $\ttk$ in both cases, maximal ideals of $R, S$ correspond to $\ttk$-points  so 
we have identifications
\be \label{eq2} X(\ttk)=\Max S, \quad Y(\ttk)=\Max R.\ee
Recall that   a ring is {\it Jacobson} if any prime ideal is an intersection of maximal ideals.  
To say anything about $\pi$ it is natural to assume that $R$  is Jacobson, since then as observed in \cite{DuK}, the Zariski topology  on $Y$ is determined by its restriction to the set of $\ttk$-points, see Subsection \ref{JC}.
\\ \\
A case of particular interest arises for invariant rings. 
For actions  affine algebraic groups there is a vast literature, 
\cite{BB}, \cite{DK}, \cite{K84}, \cite{KSS}, \cite{MFK}. 
However it is widely recognized that symmetries of geometric structures are often best understood using groupoids, and for the  action of  a groupoid on $X$, there is little general theory. More generally, suppose 
$\sim $
is an equivalence relation  on the $\ttk$-points of an  affine variety $X$.  We are interested in the map $\pi$ in the case $S=\cO(X) $
and the invariant ring 
$R=\cO(X)^\sim $ consists of all functions in $S$ that are  constant 
on equivalence classes in 
$ X(\ttk)$.
\\ \\
In \cite{MFK} Proposition 0.1 it is shown that for group actions, a geometric quotient is universal in the 
category of $\ttk$-schemes.  Our first main result Theorem \ref{prop3}, gives an analog of this result for quotients  by  equivalence relations,  provided the quotient is a Jacobson scheme.  We note that quotients by equivalence relations are studied in \cite{BB} and \cite{Ko}. 
\\ \\
We remark that the definitions  in \cite{MFK}  are formulated for an action of a group pre-scheme over a base.  In particular \cite{MFK} Definition 0.3 gives the axioms for the  action using commutative diagrams.  This does not seem appropriate for actions of groupoids.  Instead, 
for a variety  $X$ consider  the  groupoid 
$\mathfrak A(X)$ with  base all  subvarieties of $X$, and  morphisms all 
isomorphisms between subvarieties. We say the groupoid $\fG$ acts on   $X$   if there is a functor 
\be \label{bwr}\gt:\fG \lra \mathfrak A(X).\ee
Consider the equivalence relation $\sim$ on $X(\ttk)$ which is generated by the symmetric relation
$x \sim y$  iff there is a morphism  $f: A\lra B$ in $\fG$ such that $x \in \gt(A)$ and $y = \gt(f)(x)$.  The equivalence classes under $\sim$ are called    
 {\it $\fG$-orbits}. We denote $\cO(X)^\sim $ by $\cO(X)^\fG$ in this setting. Thus \[\cO(X)^\fG  =\{f \in \cO(X)| f \mbox{  is constant on } \fG \mbox{-orbits in } X(\ttk)\}. \] 
If $R=S^\fG$ is the fixed ring under certain groupoid actions,  
we showed in  \cite{M24} that the map $\pi: X =  \Spec S \lra Y=  \Spec R$ has some desirable geometric properties. Here $S$ is either a polynomial algebra or a Laurent polynomial algebra in a finite number of variables, and $\fG$ is closely related to the Weyl groupoid 
introduced in  \cite{SV2}.  The fixed ring arises in the study of the enveloping algebra of a classical simple Lie superalgebra. 
Our second result shows that provided $\ttk$ is uncountable,  $\pi$ satisfies the conditions of Theorem \ref{prop3} and is thus a geometric quotient and  universal in the category of $\ttk$-schemes. 
\\ \\
I thank Hanspeter Kraft for some helpful correspondence. 
 \section{Geometric and Categorical Quotients}
Let us recall some definitions. Throughout 
$\ttk$ is an algebraically closed field and a 
{\it variety $X$} is an integral quasi-projective $\ttk$-scheme of finite type, 
\cite{Ha} Proposition II.4.10.  Each closed point of $X$ has residue field isomorphic to $\ttk$.  We denote the set of such points by $X(\ttk)$. 
Note that for every open subset $V$ of $X$, the algebra 
${\mathcal {O}}_{X}(V)$ can be identified with the algebra of $\ttk$-valued functions on $V(\ttk)$.  This is because the 
set of closed points of $X$ is dense by \cite{Ha} Exercise II.3.14.  Suppose that $\sim $ is an equivalence relation  on $X(\ttk)$.  
\noi  
Let $Y$ be a $\ttk$-scheme.  
 The   morphism $\pi:X \lra Y$    is 
{\it universal } in the category $\cC$  or a {\it categorical  quotient} of $X$ by $\sim  $ in the category $\cC$ 
if
\bi \itema $\pi$ is constant on equivalence classes of closed points. 
\itemb $\pi$ is universal with respect to the property in (a).  That is if 
 $\mu:X \lra Z$ is  a  morphism  in $\cC$ which
 is constant on equivalence classes of closed points, then $\mu$ factors uniquely through $\pi$.\ei
Suppose that $\pi$ is as in (a) above and that $U$ is open in $Y$.  If 
$V=\pi^{-1}  (U)$, then $V(\ttk)$ is a union of equivalence classes.  
We denote by 
${\mathcal {O}}_{X}(\pi^{-1}  (U))^\sim$ the functions in 
${\mathcal {O}}_{X}(\pi^{-1}  (U))$ 
which are constant on equivalence classes. The map $\pi_U^*:\cO_Y(U) \lra {\mathcal {O}}_{X}(\pi^{-1}  (U))^\sim$ given by $\pi_U^*(f) =f\pi$ is injective.
\\ \\
Each part of  the following definition is an analog of the corresponding part of \cite{MFK} Defintion 0.6.  
\bd \label{gps}A morphism of $\ttk$-schemes $\pi :X \lra Y$ is a {\it geometric quotient} if the following conditions hold 
\bi\itemi The  induced map $\pi(\ttk):X(\ttk) \lra Y(\ttk)$ is constant on equivalence classes.
\itemii $\pi(\ttk)$ is onto  and each fibre of $\pi(\ttk)$ 
is an equivalence class.
\itemiii If $A\subseteq X(\ttk)$ is a $\sim $-invariant closed  subset, then 
$\pi(A) \subseteq Y(\ttk)$ is closed. 
\itemiv 
If $U$ is any open set in $Y$ then $\pi_U^*$ is an isomorphism.
\ei Note that if $\pi(\ttk)$ is onto, then (iii') is equivalent to 
\bi
\itemiiip If $B\subseteq X(\ttk)$ is a $\sim $-invariant open subset, then 
$\pi(B) \subseteq Y(\ttk)$ is open.
\ei
 \ed \noi
\subsection{The affine case} \label{gen1} 
We need some notation.  
If $g\in\cO(X)^\sim$, the  sets $$X_g =\{x\in X|g(x)\neq 0\} = \Spec \cO(X)_g$$ and $$Y_g =\{y\in Y|g(y)\neq 0\} = \Spec \cO(X)^\sim_g$$ are open in $X$ and $Y$ respectively, and $\pi(X_g) = Y_g$. 

\bl \label{gct} If $X$ is affine the morphism $\pi: X \lra \Spec \cO(X)^{{\sim}} =Y$ is  universal  in the category of affine schemes.\el
\bpf  It is easy to adapt the proof of \cite{K84} II.3.2.  \epf

\bl \label{jrk} If $X$ is affine, the  morphism $\pi:X \lra \Spec \cO(X)^\sim  =X/\!\!/ \sim $ satisfies  condition $($iv$)$ in Definition \ref{gps}.
\el
\bpf We have to show that for every open subset $U$ of $Y$, 
\be \label{irk}
{({\mathcal {O}}_{X}(\pi^{-1}U))^{{\sim} }}={\mathcal {O}}_{Y}(U).\ee
First assume that $U= Y_g$ is 
basic affine. 
In this case \eqref{irk} becomes 
$(\cO(X)_g)^\sim  = \cO(X)^\sim _g$ which is obvious.
In general write 
$$ U  =\bcu Y_g, \mbox{ so } \pi^{-1} U  =\bcu \pi^{-1}  Y_g.$$
Then 
$$\cO_Y(U)= \lim_{\rra} \cO_Y(  Y_g)= \lim_{\rra}\cO_X(\pi^{-1} Y_g)^\sim  
=\cO_X(\pi^{-1}  U))^\sim.$$ \epf
\noi 
\subsection{The Jacobson condition} \label{JC}
By a result of Amitsur \cite{Amit},  if $\ttk$ is an uncountable field and $R$ is   a $\ttk$-algebra of countable dimension, then $R$ is  {Jacobson}.
A scheme $Y  $ is {\it Jacobson} if for every 
closed set, $A$ we have $A = \overline{A\cap Y(\ttk)}.$
This is a local property: If $Y = \bigcup_{i\in I} Y_i$ is an open covering, then $Y$ is {Jacobson} if and only if every $Y_i$ is {Jacobson}. 
If a $\ttk$-scheme $Y$ is {Jacobson}, then the topology is determined by the induced topology on the $\ttk$-rational points $Y(\ttk)$. For example if $U_1,U_2 \subset Y$ are open subschemes such that $U_1(\ttk) = U_2(\ttk)$, then $U_1 = U_2$. This follows easily from Lemma \ref{le1} (b).
\begin{lem} \label{le7} 
If $\ttk$ is uncountable and $\pi\colon X \to Y$ is a geometric quotient, then $Y$ is {Jacobson}.
\end{lem}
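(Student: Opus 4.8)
The plan is to reduce to the affine case using the fact, recorded above, that being Jacobson is a local property on $Y$, and then to combine condition (iv) of Definition \ref{gps} with the cited result of Amitsur on $\ttk$-algebras of countable dimension. Since $Y$ has an affine open cover, by locality it suffices to show that every affine open subscheme $Y_0 = \Spec R$ of $Y$ is Jacobson. So fix such a $Y_0$ and set $V = \pi^{-1}(Y_0)$, an open subscheme of $X$. Applying condition (iv) of Definition \ref{gps} with $U = Y_0$ gives a $\ttk$-algebra isomorphism $R = \cO_Y(Y_0) \cong \cO_X(V)^{\sim}$; in particular $R$ is (isomorphic to) a $\ttk$-subalgebra of $\cO_X(V)$.

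The next step is to show that $\cO_X(V)$, hence also $R$, has at most countable dimension over $\ttk$. Since $X$ is of finite type over $\ttk$, so is its open subscheme $V$; in particular $V$ is quasi-compact, so I can cover it by finitely many affine opens $W_1,\dots,W_m$, each of which has finitely generated coordinate ring $\cO_X(W_j)$ (being an affine open of a $\ttk$-scheme of finite type), and hence each $\cO_X(W_j)$ has countable $\ttk$-dimension. Because $\cO_X$ is a sheaf, the restriction maps embed $\cO_X(V)$ as a $\ttk$-subalgebra of the finite product $\prod_{j=1}^m \cO_X(W_j)$, which has countable $\ttk$-dimension; therefore so do $\cO_X(V)$ and $R$.

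Finally, since $\ttk$ is uncountable and $R$ has countable dimension, the result of Amitsur \cite{Amit} quoted above shows that $R$ is a Jacobson ring, i.e.\ every radical ideal of $R$ is an intersection of maximal ideals. The same elementary argument shows that every maximal ideal $\fm$ of $R$ has residue field algebraic over $\ttk$: were $t \in R/\fm$ transcendental over $\ttk$, the elements $(t-a)^{-1}$ with $a \in \ttk$ would form an uncountable $\ttk$-linearly independent subset of the countable-dimensional field $R/\fm$. As $\ttk$ is algebraically closed, $R/\fm = \ttk$, so $\Max R$ coincides with the set $Y_0(\ttk)$ of $\ttk$-points. Now any closed subset $A$ of $Y_0$ has the form $A = \{\fp \in \Spec R : \fp \supseteq I\}$ for a radical ideal $I$, and $I$ is the intersection of the maximal ideals of $R$ containing it because $R$ is Jacobson, whence $A = \overline{A \cap \Max R} = \overline{A \cap Y_0(\ttk)}$. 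Thus $Y_0$ is Jacobson, and by locality $Y$ is Jacobson. I expect the only real obstacle to be the countable-dimension estimate for $\cO_X(V)$ — it matters precisely because $V$ need not be affine and $\cO_X(V)$ need not be finitely generated — together with the care needed to apply condition (iv) to the correct open set so that the ring in question is literally $R$ itself; the rest is a direct appeal to the quoted facts.
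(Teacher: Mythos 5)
Your proof is correct and follows essentially the same route as the paper: restrict to an affine open $U$ of $Y$, use condition (iv) of Definition \ref{gps} to realize $\cO_Y(U)$ as a subalgebra of $\cO_X(\pi^{-1}(U))$, observe this has countable $\ttk$-dimension, invoke Amitsur, and finish by locality of the Jacobson property. You in fact supply details the paper leaves implicit (the finite affine cover and sheaf-axiom embedding giving the countable-dimension bound, and the verification that a Jacobson ring with residue fields $\ttk$ yields a Jacobson affine scheme, which the paper delegates to Lemma \ref{le81}).
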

\begin{proof}If $U $ is open in $Y$ , then by (iv), $\cO_Y(U) \subseteq \cO_X(\pi^{-1}(U))$ which is a  $\ttk$-algebra of countable dimension, hence a {Jacobson} algebra.  Thus the result follows since the  {Jacobson} property is local.
\end{proof}
\noi For the remainder of this Subsection, 
let $R$ be a Jacobson $\ttk$-algebra,  
such that $R/m \cong \ttk$ for all   $m \in \Max  R$.
\noi 
\bl \label{le1} Let $I, J$  be radical ideals of $R$. 
\bi \itema 
Suppose that  for every maximal ideal $M$ of $R$ such that  $I \subseteq M$ we have $J \subseteq M$, then $J\subseteq I.$
\itemb If $A, B$ are  closed in $Y=\Spec R$ and ${A\cap Y(\ttk)}\subseteq {B\cap Y(\ttk)}$, then $A \subseteq B$.
\ei
\el 
\bpf Let $$\{M\in \Max R|I \subseteq M\} =\{M_\gl| \gl\in \Gl\} \mbox{ and  
} \{M\in \Max R|J \subseteq M\}=\{M_\gth| \gth \in \Theta\}.$$
By hypothesis  
$\Gl\subseteq \Theta$  and this proves (a) since $R$ is Jacobson.
If  $A=\cV(I)$ and $B=\cV(J)$, with $I, J$ radical ideals, then  the hypothesis in (a) holds so (b) follows.
\epf
\bl \label{le81} 
If  $Y= \Spec R$, then 
\bi \itema $Y(\ttk)$ is dense in $Y$.
\itemb If $R$ is a domain and  $U$ is a non-empty open set in $Y$, then $U(\ttk) = U\cap Y(\ttk)$ is dense in $U$.  
\itemc $ Y$ is Jacobson.
\itemd If $V$ is an irreducible closed subset of $Y$, then $V  \cap  Y(\ttk)$ is irreducible
\ei \el
\bpf For (a) we  show that  any open set of the form $Y_f = \{P\in Y|f  \notin P\} $, with $f\in R$ meets $Y(\ttk)$.  We can assume $f$ is not a unit. By   \cite{Kap} Theorem 1, if $P$ is maximal such that $P\cap\{f^n|n>0\}=\emptyset$, then $P$ is prime.  If $P$ is not maximal we have $f\in M$ for all  
$M\in \overline{\{P\}}$,  $M\neq P$. This cannot happen for $R$  Jacobson.
Thus 
${P}
\in Y(\ttk)\cap Y_f$.
 Under the hypothesis in (b) every non-empty open set in $Y$ is dense. Next suppose $A$ is closed in $Y$ and set $B={\overline{A\cap Y(\ttk)}}$. 
Since ${A\cap Y(\ttk)}\subseteq {B\cap Y(\ttk)}$ 
we have $A \subseteq B$ by Lemma \ref{le1}.
On the other hand  
${A\cap Y(\ttk)}\subseteq A,$  implies $B \subseteq A$ because $A$ is closed. This shows (c) and (d) follows since $V={\overline{V\cap Y(\ttk)}}$.
\epf
\subsection{Categorical Quotients}
We have the following analog of \cite{MFK} Proposition 0.1.
\bt\label{prop3}
Let $\pi\colon X \to Y$ be a geometric quotient and assume that $Y$ is {Jacobson} scheme. 
Then $\pi$ is  universal 
in the category of $\ttk$-schemes. 
\et
\bpf
Let $\mu\colon X \to Z$ be a morphism which is constant on equivalence classes of closed points, and let $V \subset Z$ be an affine open set. Then $W:=\mu^{-1}(V)$ is open and $\sim$-invariant and so $\pi(W) \subset Y$ is open, by (iii') in Defnition \ref{gps}. This gives the following diagram of homomorphisms:
$$
\begin{CD}
\cO_Y(\pi(W)) @>{\simeq}>{\pi^*}> \cO_X(W)^\sim\\
&&@AA{\mu^*}A \\
&&\cO_Z(V)
\end{CD}
$$  Note that $\pi^*$ is an isomorphism by condition (iv) in Definition \ref{gps}.
Thus there is a uniquely defined homomorphism $\gs^* \colon \cO_Z(V) \to \cO_Y(\pi(W))$ 
such that $\pi^*\circ\gs^*  = \mu^*$. 
We give $\pi(W), V$ the induced subscheme structure coming from $Y, Z$ respectively 
\cite{Ha} Exercise II.2.2.  Then  ${\cO_Y}({\pi(W)}) = \cO(\pi(W))$ and ${\cO_Z}({V}) = \cO(V)$,  \cite{Ha} page 85 and we have 
a map 
$ \cO(V)  \lra \cO(\pi(W))$.  Since $V$ is affine, this map 
corresponds to a map $\gs_V\colon \pi(W) \lra V $ by 
 \cite{Ha} Exercise II.2.4 or \cite{EH} Theorem I-40 and we have 
$\gs_V\circ\pi|_W= \mu|_W$. Using an affine open covering $\{V_i\}_{i\in I}$ of $Z$, the different $\gs_{V_i}$ coincide on the intersections $\pi(W_i)\cap \pi(W_j) = \pi(W_i\cap W_j)$ where $W_i := \mu^{-1}(V_i)$, and thus define a morphism $\gs\colon Y \to Z$ such that $\gs\circ\pi= \mu$.
\epf 
 

\section{Weyl Groupoids} \label{awg}
From now on we assume $\ttk$ has characteristic zero. In \cite{SV2} Sergeev and Veselov introduced a groupoid $\mathfrak{W}=\mathfrak{W}(\Gd)$ in connection with the representation theory of 
a Kac-Moody Lie superalgebra $\fg$ of finite  type with set of roots $\Gd$.
Let $\fh$  be a Cartan subalgebra of 
$\fg_0$ and 
$ \mathbb{T}$ a maximal torus in a suitable  Lie  supergroup $G$ with  Lie $G =\fg$. 
Also, let $Z(\fg)$  be the center of the enveloping algebra $U(\fg)$, and $J(G)$ 
the supercharacter $\Z$-algebra of finite dimensional representations of $G$.
 There is an  action of $\cW$ on $\fh^*$ 
such that the invariant ring is  isomorphic to $Z(\fg)$. In \cite{M24}  we introduced two related  continuous   Weyl groupoids $\cW^c$  and $\cW_*^c$ 
and  actions on $X=\fh^*$ and  $ \mathbb{T}$   
such that the invariant rings 
are isomorphic to $Z(\fg)$ and  $J(G) \ot_{\Z}\ttk$ respectively.  Furthermore all  orbits 
are closed under these actions,  \cite{M24} Theorem 5.7 and Proposition 5.9. From now on, let $\fG$ denote either of the groupoids $\cW^c$  or  $\cW_*^c$ with action   on $X$ as in \cite{M24} Notation 1.1.  The strong Nullstellensatz gives a bijection between radical ideals in $\cO(X)^\fG$ and their zero loci in $X$, which we call {\it superalgebraic sets}. We need the following characterization of these sets from \cite{M24} Corollary 6.10.  
\bp\label{newp}
The superalgebraic sets in $X$  are exactly the Zariski closed sets whose $\ttk$-points  are invariant under the Weyl groupoid.
\ep
\bt \label{newcor}Assume that $\ttk$ is an uncountable algebraically closed field. Then the 
morphism $\pi:X\lra X/\!\!/ \fG = \Spec \cO(X)^\fG$ is a geometric quotient and universal  in the category of $\ttk$-schemes.
\et

\bpf By Theorem \ref{prop3} it suffices to show $\pi$ is a geometric quotient.%
We check the conditions of Definition \ref{gps} hold.  The map $\pi $ is constant on $\fG$-orbits of closed points 
 and is onto by the weak Nullstellensatz,  \cite{M24} Equation (2.4). Also the fibers over a closed point are  $\fG$-orbits by \cite{M24} Corollary 5.8.  Thus conditions (i) and (ii)  hold.  Since $X$ is affine, condition (iv) holds by Lemma \ref{jrk}.
Condition (iii)  is checked in the Lemma below.
\epf 
\noi

\bl \label{rgs2}Consider the  morphism $\pi:X \lra \Spec \cO(X)^\fG =X/\!\!/ \fG$.  If $A\subseteq X(\ttk)$ is a $\fG$-invariant closed  subset, then 
$\pi(A) \subseteq (X/\!\!/ \fG)(\ttk)$ is closed. Thus $\pi$ satisfies condition  $($iii$)$ in Definition \ref{gps}.
 \el
\bpf Suppose $A = V\cap X(\ttk)$ with $V$    closed in $X$ and $A$ $\fG$-invariant.  
By Proposition \ref{newp}, we have 
  $\cV(I) =V$ 
for some  ideal  
$ I$     of $\cO(X)^\fG$.
Let $U = X \bsk V$ and $B =U \cap X(\ttk)= X(\ttk)\bsk A$.
 We claim that if $x\in B$, then for some   $g \in I,$  we have 
  $x\in X_g(\ttk) \subseteq B$. Let $m_x = \{f\in\cO(X)^\fG |f(x)= 0\}$. If $I \subseteq m_x$, we have $x\in \cV(m_x)    \subseteq \cV(I) =V$, a contradiction.  The claim holds for any $g\in I \bsk m_x$.  Now 
  we can write $B$ as a union $ B  =\bcu_{g: g \in I} X_g(\ttk)$,  \mbox{ then } $\pi(B)  =\bcu_{g: g \in I} Y_g(\ttk)$ is open in $Y$, as required. 
\epf

\subsection{Questions} \label{qn}
We might ask whether Theorem \ref{newcor} holds without the assumption that 
$\ttk$ is uncountable.  This would be the case if we had a positive answer to the question  below.  
\bq \label{qn1} {\rm 
Suppose that $S$ is a polynomial ring in finitely many variables over $\ttk$ and $R$ is a 
subalgebra of $S$. Is $R$ a Jacobson ring?}
\eq \noi 
In  \cite{SV101}, Sergeev and Veselov defined an action    of the Weyl groupoid $\cW$  associated to $\fgl(n|m)$  on $X=\ttk^{n|m}$, depending on a non-zero parameter $\gk$ see also \cite{M23}. 
The fixed ring $R = \cO(X)^\cW$ is isomorphic to the algebra of quantum integrals of the   deformed Calogero-Moser integrable systems introduced in \cite{SV1}. Let $S=\cO(X) $ and consider the map $X=\ttk^{n|m}\lra Y=\Spec R.$
\bq \label{qn2}  {\rm Is  $\pi$ a geometric quotient or universal in the category of $\ttk$-schemes?}
\eq\noi 
All the geometric results is obtained in \cite{M24} depend on the weak Nullstellensatz for $R$, \cite{M24} Theorem 4.1, and the proof does not adapt to the above situation.  Thus a less ambitious question is
\bq  \label{qn3}{\rm Does the pair $(R,S) $ does not satisfy the RWN?}
\eq \noi 
We are mainly interested in the case where $R$ is not a finitely generated $\ttk$-algebra.  By \cite{SV1} Theorem 1.1, this means that $\gk=-p/q$ where $1\le p\le m$  and $1\le q\le n$.  If $\gk=-1$, the two previous questions have answers in the affirmative,  \cite{M24}.  For some related algebras involving generalized power sums, see \cite{ER}, \cite{SV09}.
\\ \\ In \cite{M24}, Section 2 there is an  example of a subalgebra $R$ of the 
 polynomial algebra $S$  in two variables such that the pair $(R,S) $ does not satisfy the 
RWN.

\begin{bibdiv}
\begin{biblist}
\bib{Amit}{article}{
   author={Amitsur, A. S.},
   title={Algebras over infinite fields},
   journal={Proc. Amer. Math. Soc.},
   volume={7},
   date={1956},
   pages={35--48},
   issn={0002-9939},
   review={\MR{75933}},
   doi={10.2307/2033240},
}

\bib{BB}{article}{
   author={Bia\l ynicki-Birula, Andrzej},
   title={Quotients by actions of groups},
   conference={
      title={Algebraic quotients. Torus actions and cohomology. The adjoint
      representation and the adjoint action},
   },
   book={
      series={Encyclopaedia Math. Sci.},
      volume={131},
      publisher={Springer, Berlin},
   },
   date={2002},
   pages={1--82},
}

\bib{DK}{book}{
   author={Derksen, Harm},
   author={Kemper, Gregor},
   title={Computational invariant theory},
   series={Encyclopaedia of Mathematical Sciences},
   volume={130},
   edition={Second enlarged edition},
   note={With two appendices by Vladimir L. Popov, and an addendum by
   Norbert A'Campo and Popov;
   Invariant Theory and Algebraic Transformation Groups, VIII},
   publisher={Springer, Heidelberg},
   date={2015},
   pages={xxii+366},
   isbn={978-3-662-48420-3},
   isbn={978-3-662-48422-7},
   review={\MR{3445218}},
   doi={10.1007/978-3-662-48422-7},
}
	
\bib{DuK}{article}{
   author={Dufresne, Emilie},
   author={Kraft, Hanspeter},
   title={Invariants and separating morphisms for algebraic group actions},
   journal={Math. Z.},
   volume={280},
   date={2015},
   number={1-2},
   pages={231--255},
   issn={0025-5874},
   review={\MR{3343905}},
   doi={10.1007/s00209-015-1420-0},
}

\bib{EH}{book}{
   author={Eisenbud, David},
   author={Harris, Joe},
   title={The geometry of schemes},
   series={Graduate Texts in Mathematics},
   volume={197},
   publisher={Springer-Verlag, New York},
   date={2000},
   pages={x+294},
   isbn={0-387-98638-3},
   isbn={0-387-98637-5},
   review={\MR{1730819}},
}

\bib{ER}{article}{
   author={Etingof, Pavel},
   author={Rains, Eric},
   title={On Cohen-Macaulayness of algebras generated by generalized power
   sums},
   note={With an appendix by Misha Feigin},
   journal={Comm. Math. Phys.},
   volume={347},
   date={2016},
   number={1},
   pages={163--182},
   issn={0010-3616},
   review={\MR{3543181}},
   doi={10.1007/s00220-016-2657-0},
}
	
\bib{Ha}{book}{ author={Hartshorne, Robin}, title={Algebraic geometry}, note={Graduate Texts in Mathematics, No. 52}, publisher={Springer-Verlag}, place={New York}, date={1977}, pages={xvi+496}, isbn={0-387-90244-9}, review={\MR{0463157 (57 \#3116)}}, }

\bib{Kap}{book}{ author={Kaplansky, Irving}, title={Commutative rings}, edition={Revised edition}, publisher={The University of Chicago Press, Chicago, Ill.-London}, date={1974}, pages={ix+182}, review={\MR{0345945 (49 \#10674)}}, }

\bib{Ko}{article}{
   author={Koll\'{a}r, J\'{a}nos},
   title={Quotients by finite equivalence relations},
   note={With an appendix by Claudiu Raicu},
   conference={
      title={Current developments in algebraic geometry},
   },
   book={
      series={Math. Sci. Res. Inst. Publ.},
      volume={59},
      publisher={Cambridge Univ. Press, Cambridge},
   },
   date={2012},
   pages={227--256},
   review={\MR{2931872}},
}

\bib{K84}{book}{
   author={Kraft, Hanspeter},
   title={Geometrische Methoden in der Invariantentheorie},
   language={German},
   series={Aspects of Mathematics, D1},
   publisher={Friedr. Vieweg \& Sohn, Braunschweig},
   date={1984},
   pages={x+308},
   isbn={3-528-08525-8},
   review={\MR{768181}},
   doi={10.1007/978-3-322-83813-1},
}

\bib{KSS}{article}{
   author={Kraft, Hanspeter},
   author={Slodowy,Peter},
   author={Springer, Tonny},
   conference={
      title={Algebraische Transformationsgruppen und Invariantentheorie},
   },

   book={
      series={DMV Sem.},
      volume={13},
      publisher={Birkh\"{a}user, Basel},
   },
   date={1989},
   pages={63--75},
   review={\MR{1044582}},
}

\bib{MFK}{book}{
   author={Mumford, D.},
   author={Fogarty, J.},
   author={Kirwan, F.},
   title={Geometric invariant theory},
   series={Ergebnisse der Mathematik und ihrer Grenzgebiete (2) [Results in
   Mathematics and Related Areas (2)]},
   volume={34},
   edition={3},
   publisher={Springer-Verlag, Berlin},
   date={1994},
   pages={xiv+292},
   isbn={3-540-56963-4},
   review={\MR{1304906}},
   doi={10.1007/978-3-642-57916-5},
}

\bib{M24}{article}{
   author={Musson, Ian M.},
   title={On the geometry of some algebras related to the Weyl groupoid},
   conference={
      title={Recent advances in noncommutative algebra and geometry},
   },
   book={
      series={Contemp. Math.},
      volume={801},
      publisher={Amer. Math. Soc., [Providence], RI},
   },
   date={2024},
   pages={155--177},
   review={\MR{4756383}},
   doi={10.1090/conm/801/16039},
}

\bib{M23}{article}{ author={Musson, Ian M.}, title={
Young diagrams, deformed Calogero-Moser systems and Cayley graphs
}, 
journal={arXiv:2412.16259}, 
date={2024}}


\bib{SV1}{article}{
   author={Sergeev, A. N.},
   author={Veselov, A. P.},
   title={Deformed quantum Calogero-Moser problems and Lie superalgebras},
   journal={Comm. Math. Phys.},
   volume={245},
   date={2004},
   number={2},
   pages={249--278},
   issn={0010-3616},
   review={\MR{2039697}},
   doi={10.1007/s00220-003-1012-4},
}

\bib{SV09}{article}{
   author={Sergeev, A. N.},
   author={Veselov, A. P.},
   title={Deformed Macdonald-Ruijsenaars operators and super Macdonald
   polynomials},
   journal={Comm. Math. Phys.},
   volume={288},
   date={2009},
   number={2},
   pages={653--675},
   issn={0010-3616},
   review={\MR{2500994}},
   doi={10.1007/s00220-009-0779-3},
}

\bib{SV2}{article}{
   author={Sergeev, Alexander N.},
   author={Veselov, Alexander P.},
   title={Grothendieck rings of basic classical Lie superalgebras},
   journal={Ann. of Math. (2)},
 volume={173},
   date={2011},
   number={2},
   pages={663--703},
   issn={0003-486X},
   review={\MR{2776360}},
   doi={10.4007/annals.2011.173.2.2}}

\bib{SV101}{article}{
   author={Sergeev, Alexander N.},
   author={Veselov, Alexander P.},
   title={Orbits and invariants of super Weyl groupoid},
   journal={Int. Math. Res. Not. IMRN},
   date={2017},
   number={20},
   pages={6149--6167},
   issn={1073-7928},
   review={\MR{3712194}},
   doi={10.1093/imrn/rnw182},
}

\end{biblist}
\end{bibdiv}

\end{document}